\documentclass[11pt]{article}
\usepackage{latexsym, graphicx, epsfig, amsmath,amssymb,amsthm, bm}
\usepackage{geometry}
\usepackage{caption, subcaption}
\usepackage{epstopdf}
\usepackage{hyperref}
\usepackage{enumitem}
\usepackage{comment}
\usepackage{color}

\title{Predictability of Observables of Dynamical Systems\footnote{This work is partially supported by AFOSR FA9550-24-1-0237.}}
\author{Xinyu Liu and Dongbin Xiu\footnote{ Department of Mathematics, The Ohio State University, Columbus, OH, USA. Emails: \texttt{liu.12165@osu.edu, xiu.16@osu.edu.}}}
\date{}

\newtheorem{theorem}{Theorem}[section]
\newtheorem{lemma}[theorem]{Lemma}
\newtheorem{proposition}[theorem]{Proposition}
\newtheorem{corollary}[theorem]{Corollary}
\theoremstyle{definition}
\newtheorem{definition}[theorem]{Definition}
\theoremstyle{remark}
\newtheorem{remark}[theorem]{Remark}

\begin{document}

\maketitle

\begin{abstract}
We study the evolution of observables of dynamical systems. For linear systems, we show that observables satisfy a closed differential equation whose minimal order is determined by the dynamical system and observation operator. This yields a minimal order closure and an equivalent discrete delay representation of the observable dynamics. For nonlinear systems we introduce the notion of \emph{diminishing ambiguity}, which provides a framework under which the instantaneous observable dynamics can be approximately determined from sufficiently long output history, resulting in delay differential equation representation. These results clarify when observable dynamics can be inferred from past history without knowledge of the dynamical system and its full state.
\end{abstract}

\section{Introduction}

Consider an autonomous dynamical system
\begin{equation} \label{u}
    \dot u(t) = A(u(t)), \qquad u(t)\in\mathbb{R}^n,
  \end{equation}
  where we assume the system is well defined such that
  solution existence and uniqueness hold. 
  
In many practical situations, the system and its full state $u(t)$ are unavailable. Instead, one observes a set of observables which are functions of the full state. Let us define observables
\begin{equation} \label{y}
y(t) = b(u(t)), \qquad y(t)\in\mathbb{R}^m, \quad m\leq n.
\end{equation}
A few examples of observables are
\begin{itemize}
    \item Average: $b=\frac{1}{n}(1,\cdots,1)^T$, $y=b^T u = \frac{1}{n}\sum_{i=1}^n u_i$;
    \item Component selection: $b=e_k$, $y=b^T u = u_k$, $k=1,\dots,n$;
    \item $\ell_2$-norm: $b=u^Tu = \|u\|^2_2$;
    \item Maximum-norm: $b=\max_{1\leq k\leq n} |u_k|=\|u\|_{\infty}$.
\end{itemize}

We study the following question: {\em when do the observables $y(t)$ satisfy a closed dynamical evolution law that depends only on $y(t)$?}
This paper develops a structural study for this problem.

\paragraph{Related Work}
The problem studied in this paper is related to the classical embedding theory of Takens \cite{Takens1981} and its extensions \cite{SauerYorkeCasdagli1991,Robinson2011}. Takens’ embedding theorem shows that, under generic conditions, the state of a dynamical system can be reconstructed from delayed observations of a scalar observable. Delay coordinates also play an important role in modern data-driven modeling and Koopman operator methods \cite{ArbabiMezic2017,BruntonProctorKutz2016}.

More recently, numerical models for observable dynamics have been
constructed directly from observable history without reconstructing
the underlying state. One example is the Flow Map Learning (FML) framework
\cite{FuChangXiu_JMLMC20, ChurchillXiu_FML2023}, which is motivated by a finite memory approximation of the Mori-Zwanzig formalism \cite{mori1965, zwanzig1973}.

\paragraph{Contribution of this work}
The objective of the present work is fundamentally different from the (Takens') embedding perspective, which requires the assumption that the observable map $b$ is injective on the state space in order to reconstruct the underlying state $u$.
In this paper, we do not assume $b$ is injective and allow multiple (even infinitely many) states $u$ to map to same observable $y$. Such is often the case in practical applications. Consequently, we do not seek to reconstruct the state $u$. Instead, we study the mathematical
conditions under which the future evolution of an observable $y(t)$ can be determined directly from its past history. 

Compared to the FML work, this paper provides a more fundamental mathematical study, without requiring the finite-memory assumption used in FML. In Section \ref{sec:linear}, we
show that for linear systems the observable dynamics admits a finite
delay representation analytically. The result is based on the finite dimensional structure of the observable trajectories. 

For nonlinear systems, the finite-dimensional structure in observable dynamics is absent in general. In Section \ref{sec:nonlinear}, we introduce the notion of {\em diminishing ambiguity}, which establishes a condition under which the observable dynamics can be approximated by its past history without requiring knowledge of the underlying state.

\section{Preliminaries}

Throughout this paper, we shall
fix a compact forward-invariant set $K\subset \mathbb{R}^n$ to represent the regime of interest for the solution $u(t)$. For example, this can be a trapping region, a compact invariant set, or a neighborhood of an attractor. 
All trajectories of $u(t)$ are assumed to remain in $K$ for the time intervals under discussion.

For the observable, we employ the following notation, commonly used in delay differential equation literature.
For $h\in(0,\infty]$ and a trajectory $y(\cdot)$, define its $h$-history segment
\begin{equation} \label{yt}
y_{t,h}(\theta) := y(t+\theta), \qquad \theta\in[-h,0],
\end{equation}
with $y_{t,\infty}$ understood when $h=\infty$.

\begin{definition}[Admissible histories] \label{def:history}
Let $\mathcal{H}_\infty$ be the set of functions $\phi:(-\infty,0]\to\mathbb{R}^m$ such that there exists a trajectory
$u(\cdot)$ of the ODE with $u(t)\in K$ for all $t\le 0$ and $\phi(\theta)=b(u(\theta))$ for all $\theta\le 0$.
For $h<\infty$, define $\mathcal{H}_h := \{\phi|_{[-h,0]} : \phi\in\mathcal{H}_\infty\}$.
\end{definition}

In the following, we shall first discuss the case of linear dynamical systems with linear observables, where many properties can be (relatively) easier to understand. We will then discuss general nonlinear systems.

\section{Linear Systems with Linear Observations}
\label{sec:linear}

Let us first consider linear dynamical system with linear observables,
\begin{equation} \label{lin-lin}
\left\{
\begin{split}
    &\dot u(t)=Au(t), \qquad A\in\mathbb{R}^{n\times n}, \\
    & y(t)=Bu(t), \qquad B\in\mathbb{R}^{m\times n}, \quad m\leq n,
\end{split}
\right.
\end{equation}
where $A$ and $B$ are assumed to be full rank. Then $u$ and $y$ are $C^\infty$, and the derivatives satisfy
\begin{equation} \label{derivative}
    y^{(k)}(t)=BA^k u(t), \qquad \forall k\ge 0.
\end{equation}

\subsection{Krylov Subspace and Minimal Order Closure}

Let us consider the observable Krylov subspace for the system \eqref{lin-lin}. It is defined as
\begin{equation} \label{Vk}
\mathcal V_k = \mathrm{range}\big[B^\top, A^\top B^\top, \dots, (A^\top)^k B^\top\big]
\subseteq\mathbb{R}^n, \qquad k\geq 0,
\end{equation}
where the bracket denotes horizontal concatenation.

Equivalently,
\begin{equation}
\mathcal V_k
= \mathrm{span}\Big\{ (A^\top)^j B^\top v \;:\; j=0,\dots,k,\ v\in\mathbb R^m \Big\}
\subseteq \mathbb R^n,
\end{equation}
where each $(A^\top)^j B^\top$ is an $n\times m$ matrix and $\mathcal V_k$ is a subspace of $\mathbb R^n$ spanned by its columns. Obviously,
\[
\mathcal V_0 \subseteq \mathcal V_1 \subseteq \cdots\subseteq \mathbb{R}^n,
\]
and 
\[
\dim \mathcal{V}_0 \leq \dim \mathcal{V}_1\cdots\leq n.
\]
We then immediately obtain the following result.
\begin{lemma}{(Stabilization index)}
There exists a stabilization index $r\le n-1$ such that $\dim \mathcal V_{r-1}<\dim \mathcal V_r=\dim \mathcal V_{r+1}\leq n$, where $\dim \mathcal{V}_{-1}=0.$
\end{lemma}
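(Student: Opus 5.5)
The plan is to take $r$ to be the first index at which the nested chain $\mathcal V_0\subseteq\mathcal V_1\subseteq\cdots$ stops growing, and then bound $r$ by counting how many strict dimension increases can fit inside $\mathbb R^n$.

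First, set $d_k=\dim\mathcal V_k$ for $k\ge -1$, with $d_{-1}=0$. This is a nondecreasing sequence of integers bounded by $n$. Since $B$ is full rank with $m\ge1$, we have $B^\top\neq 0$, so $d_0=\operatorname{rank} B^\top=m\ge 1>d_{-1}$. Define
\[
r:=\min\{k\ge 0: d_{k+1}=d_k\}.
\]
This set is nonempty. If it were empty, then $d_{k+1}\ge d_k+1$ for every $k\ge0$, so $d_k\ge m+k$, which exceeds $n$ for large $k$; that is a contradiction.

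Next, check the two required properties. By minimality of $r$, either $r=0$, in which case $d_{-1}=0<m=d_0$, or $r\ge1$, in which case $d_{r-1}=d_r$ would contradict minimality, so $d_{r-1}<d_r$. In both cases $d_{r-1}<d_r=d_{r+1}\le n$.

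For the bound, minimality also gives the strict chain $d_0<d_1<\cdots<d_r$. Hence
\[
n\ge d_r\ge d_0+r\ge 1+r,
\]
so $r\le n-1$. The sharper bound $r\le n-m$ also follows, though it is not needed.

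The statement as written only asks for $d_r=d_{r+1}$, which the argument above covers. To justify the name \emph{stabilization}, I would also show that $\mathcal V_k=\mathcal V_r$ for all $k\ge r$. The key step is to write $\mathcal V_{k+1}=\mathcal V_0+A^\top\mathcal V_k$, which holds since $\mathcal V_{k+1}$ is spanned by $B^\top$ together with $A^\top$ applied to the generators of $\mathcal V_k$. Equal dimensions together with $\mathcal V_r\subseteq\mathcal V_{r+1}$ give $\mathcal V_{r+1}=\mathcal V_r$. Then
\[
\mathcal V_{r+2}=\mathcal V_0+A^\top\mathcal V_{r+1}=\mathcal V_0+A^\top\mathcal V_r=\mathcal V_{r+1},
\]
and induction gives $\mathcal V_k=\mathcal V_r$ for all $k\ge r$.

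The main (mild) obstacle is this invariance step: the recursion $\mathcal V_{k+1}=\mathcal V_0+A^\top\mathcal V_k$ is what turns stagnation at one step into permanent stabilization. Everything else is a monotone bounded-integer counting argument.
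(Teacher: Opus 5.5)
Your proof is correct and is essentially the argument the paper has in mind: the paper gives no separate proof, treating the lemma as immediate from the nested chain $\mathcal V_0\subseteq\mathcal V_1\subseteq\cdots$ with dimensions bounded by $n$, and your minimal-index choice together with the count $n\ge d_r\ge d_0+r\ge 1+r$ is exactly that reasoning made explicit. Your extra points --- that $B\neq 0$ is what gives $d_0\ge 1>d_{-1}$, and that the recursion $\mathcal V_{k+1}=\mathcal V_0+A^\top\mathcal V_k$ makes the stagnation permanent (so $r$ is the unique index with this property and $\mathcal V_r$ is $A^\top$-invariant) --- are details the paper leaves implicit.
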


\begin{theorem}{(Minimal order closure)} \label{thm:k-ODE}
Let $r$ be the stabilization index.
Then there exist matrices $C_0,\dots,C_r\in\mathbb R^{m\times m}$ such that
\begin{equation} \label{BA}
BA^{r+1} = \sum_{k=0}^{r} C_k BA^k.
\end{equation}
Equivalently, by defining the (monic) matrix polynomial
\begin{equation} \label{Q}
Q(\lambda):=\lambda^{r+1}I_m-\sum_{k=0}^{r}C_k\,\lambda^k,
\end{equation}
we have
\begin{equation} \label{BQ}
B\,Q(A)=0.
\end{equation}
Consequently, $y(t)$ satisfies the $(r+1)$-th order linear vector ODE
\begin{equation} \label{kth-ODE}
y^{(r+1)}(t) = \sum_{k=0}^{r} C_k y^{(k)}(t).
\end{equation}
Moreover, $r+1\le n$, and it is minimal with this property.
\end{theorem}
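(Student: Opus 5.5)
The plan is to read everything off the nested Krylov subspaces $\mathcal V_k$, using the Stabilization index lemma and the derivative formula \eqref{derivative}. Minimality is understood in the natural sense: no monic relation of order $q\le r$ of the form $BA^{q}=\sum_{k<q}C_kBA^k$ holds, or equivalently, no ODE of order $q\le r$ with constant $m\times m$ coefficients is satisfied by $y$ for every initial state $u(0)$.

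\textbf{Existence of the $C_k$.} By the lemma, $\mathcal V_{r+1}=\mathcal V_r$. Hence every column of $(A^\top)^{r+1}B^\top$ lies in $\mathcal V_r$. It is therefore a linear combination of the columns of $B^\top, A^\top B^\top,\dots,(A^\top)^rB^\top$. Collecting the coefficients column by column gives $m\times m$ matrices $D_0,\dots,D_r$ with
\[
(A^\top)^{r+1}B^\top=\sum_{k=0}^r (A^\top)^kB^\top D_k .
\]
Transposing and setting $C_k:=D_k^\top$ yields \eqref{BA}. Rearranging gives $BQ(A)=0$, i.e.\ \eqref{BQ}, since the matrix polynomial \eqref{Q} has coefficients multiplying on the left.

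\textbf{The ODE and the bound.} Multiply \eqref{BA} on the right by $u(t)$ and use \eqref{derivative}, $y^{(k)}=BA^ku$. This gives \eqref{kth-ODE} directly. The bound $r+1\le n$ is just $r\le n-1$ from the lemma.

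\textbf{Minimality (the main step).} Suppose $y$ satisfied an order-$q$ ODE $y^{(q)}=\sum_{k<q}\tilde C_k y^{(k)}$ with $q\le r$, for every trajectory.
\begin{enumerate}[label=(\roman*)]
\item Evaluate the ODE at $t=0$ via \eqref{derivative}. This gives $BA^q u_0=\sum_{k<q}\tilde C_kBA^ku_0$ for all $u_0\in\mathbb R^n$, hence the matrix identity $BA^q=\sum_{k<q}\tilde C_kBA^k$.
\item Transposing as above, the columns of $(A^\top)^qB^\top$ lie in $\mathcal V_{q-1}$, so $\mathcal V_q=\mathcal V_{q-1}$.
\item This equality propagates upward, which is the point to verify carefully. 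From the definition, $\mathcal V_{k+1}=\mathcal V_0+A^\top\mathcal V_k$ for all $k\ge0$. Thus $\mathcal V_q=\mathcal V_{q-1}$ gives $\mathcal V_{q+1}=\mathcal V_0+A^\top\mathcal V_q=\mathcal V_0+A^\top\mathcal V_{q-1}=\mathcal V_q$. By induction, $\mathcal V_k=\mathcal V_{q-1}$ for all $k\ge q-1$.
\item In particular $\dim\mathcal V_{r-1}=\dim\mathcal V_r$, because $r-1\ge q-1$. This contradicts the defining strict inequality $\dim\mathcal V_{r-1}<\dim\mathcal V_r$ of the stabilization index.
\end{enumerate}
The edge case $q=0$ (the relation $B=0$, so $\mathcal V_0=\{0\}$) is excluded by the convention $\dim\mathcal V_{-1}=0$ together with $B$ being full rank and nonzero. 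Hence $r+1$ is the minimal order.
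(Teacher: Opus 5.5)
Your proof is correct and follows the same route as the paper. The $C_k$ come from $\mathcal V_{r+1}=\mathcal V_r$ by transposition, the ODE comes from applying \eqref{BA} to $u(t)$ with \eqref{derivative}, and minimality comes from the Krylov stabilization. The only difference is that the paper compresses minimality into the remark that it ``follows from minimality of $r$,'' whereas you supply the justification: the recursion $\mathcal V_{k+1}=\mathcal V_0+A^\top\mathcal V_k$ shows that a lower-order relation holding for all initial states would force $\mathcal V_{r-1}=\mathcal V_r$.
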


\begin{proof}
The stabilization $\mathcal V_r=\mathcal V_{r+1}$ implies
\[
(A^\top)^{r+1}B^\top \in \mathcal V_r.
\]
Hence there exist matrices $D_0,\dots,D_r\in\mathbb R^{m\times m}$ such that
\begin{equation}
(A^\top)^{r+1}B^\top
= \sum_{k=0}^r (A^\top)^k B^\top D_k^\top.
\end{equation}
Transposing yields \eqref{BA} with $C_k=D_k$.
Applying this to $u(t)$ and using the derivative ODE \eqref{derivative} gives \eqref{kth-ODE}.
Minimality follows from minimality of $r$ in the Krylov stabilization.
\end{proof}

\begin{remark}[Relation to Cayley--Hamilton]
The Cayley--Hamilton theorem gives $p_A(A)=0$ for the characteristic polynomial $p_A$ of degree $n$.
Multiplying by $B$ yields $Bp_A(A)=0$, so a closure always exists with order at most $n$.
Theorem~\ref{thm:k-ODE} identifies the minimal order $r+1 \le n$
determined by the Krylov sequence.
\end{remark}

\begin{remark}
    If $B\in\mathbb{R}^{n\times n}$ and is full rank, then the observables $y$ are essentially the state variables $u$, as $u=B^{-1}y$. It is trivial to see that the stabilization index of \eqref{Vk} is $r=0$ and $y$ follows the 1st-order ODE $\dot{y}=BAB^{-1}y$.
\end{remark}

\begin{corollary}[Invariance under differentiation]
\label{cor:invariance}
Let $\mathcal S$ denote the space of observable trajectories associated with the
linear system \emph{(4)}. Then $\mathcal S$ is invariant under differentiation.
That is, if $y\in\mathcal S$, then $\dot y\in\mathcal S$.
\end{corollary}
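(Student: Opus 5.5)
The plan is to show directly that the derivative of an observable trajectory is again an observable trajectory, generated by a different initial state. We take $\mathcal S=\{t\mapsto Be^{tA}u_0 : u_0\in\mathbb R^n\}$, the set of outputs of \eqref{lin-lin} over all initial conditions. If the paper intends $\mathcal S$ to be restricted to trajectories remaining in $K$, the argument also has to track the new initial state, and we return to this below.

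\textbf{Step 1.} Fix $y\in\mathcal S$, so $y(t)=Bu(t)$ with $u(t)=e^{tA}u_0$. By \eqref{derivative} with $k=1$, $\dot y(t)=BAu(t)=BAe^{tA}u_0$.

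\textbf{Step 2.} Since $A$ commutes with $e^{tA}$, we have $BAe^{tA}u_0=Be^{tA}(Au_0)$. Set $w(t):=e^{tA}Au_0=Au(t)$. Then $w$ solves $\dot w=Aw$ with initial condition $w(0)=Au_0$, and $\dot y(t)=Bw(t)$. Hence $\dot y\in\mathcal S$. The same computation, iterated, gives $y^{(k)}\in\mathcal S$ for every $k$.

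\textbf{Step 3 (alternative via Theorem~\ref{thm:k-ODE}).} If $\mathcal S$ is instead read as the solution space of the closed ODE \eqref{kth-ODE}, invariance follows from constant coefficients. If $y$ satisfies $y^{(r+1)}=\sum_{k=0}^r C_ky^{(k)}$, differentiating once gives $\dot y^{(r+1)}=\sum_{k=0}^r C_k\dot y^{(k)}$. So $\dot y$ satisfies the same equation, and $\mathcal S$ is again closed under differentiation.

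\textbf{Main obstacle.} The computation itself is routine; the only delicate point is which $\mathcal S$ is meant.
\begin{itemize}
\item If $\mathcal S$ is restricted to trajectories in the compact forward-invariant set $K$, then $Au(t)$ need not lie in $K$.
\item In that case I would use linearity. The set of observable trajectories from arbitrary initial states in $\mathbb R^n$ is a linear space, namely $\{B e^{tA}u_0\}$. Step 2 then applies verbatim.
\item If one insists on staying in $K$, one can rescale: the state $\varepsilon Au_0$ lies in a neighborhood of the origin. This helps only if $K$ contains such a neighborhood, which the paper does not assume. So the rescaling route is not a valid general argument, and I would not rely on it.
\end{itemize}
I would therefore state explicitly that $\mathcal S$ is the linear span of all observable trajectories, $\{Be^{tA}u_0: u_0\in\mathbb R^n\}$.
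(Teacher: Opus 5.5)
Your proposal is correct. Your main argument (Steps 1--2) takes a different route from the paper's, while the paper's proof is essentially your Step 3: it differentiates the closure ODE \eqref{kth-ODE} and concludes from ``$\dot y$ satisfies the same equation'' that $\dot y\in\mathcal S$.

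That last inference tacitly identifies $\mathcal S$ with the full solution set of \eqref{kth-ODE}. For $m=1$ this is justified by a dimension count, since both spaces have dimension $r+1$. For $m>1$ it can fail. The solution space of \eqref{kth-ODE} has dimension $m(r+1)$, whereas $\dim\mathcal S=\dim\mathcal V_r$, because $u_0\mapsto Be^{tA}u_0$ has kernel $\mathcal V_r^\perp$. These can differ: for example, $n=3$, $m=2$, $\dim\mathcal V_0=2$, $\dim\mathcal V_1=3$ gives $r=1$, so $\dim\mathcal S=3<4$.

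Your Step 2 avoids this issue. Using $Ae^{tA}=e^{tA}A$, you exhibit $\dot y$ as the output of the trajectory started at $Au_0$. This needs neither Theorem~\ref{thm:k-ODE} nor any dimension argument, and it gives $y^{(k)}\in\mathcal S$ for all $k$ at once. It also shows that differentiation on $\mathcal S$ is induced by $A$ acting on initial states.

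The paper's route has the merit of being phrased purely through the observable equation, without reference to the hidden state. It is complete only once $\mathcal S$ is known to equal the solution space of \eqref{kth-ODE}, which is exactly the caveat you attach to Step 3.

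Your choice of the unrestricted linear space $\{Be^{tA}u_0\}$ matches how the paper actually uses $\mathcal S$ (as a linear space in Theorem~\ref{thm:discrete1}). So your remark about $K$ is apt, and you were right to decline the rescaling trick.
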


\begin{proof}
By Theorem~3.2, every observable trajectory $y\in\mathcal S$ satisfies
the $(r+1)$-th ODE \eqref{kth-ODE}. Differentiate this relation with respect to $t$ and set $z(t):=\dot y(t)$. Then $z$ satisfies the same linear differential equation as $y$.
Therefore $z=\dot y\in\mathcal S$.
\end{proof}

\subsection{Delay Representations}

The minimal order closure result from Theorem \ref{thm:k-ODE} gives us a way to establish delay representations of the observable dynamics. 

\subsubsection{Scalar Observable}

Consider scalar observable case, $m=1$ in \eqref{lin-lin}, i.e.,
\begin{equation} \label{scalar-lin}
    \dot u(t)=Au(t), \qquad y=b^T u,
\end{equation}
where $b$ is a nonzero vector of length $n$.

\begin{theorem}[Discrete delay representation] \label{thm:discrete1}
For scalar observable ($m=1$) system \eqref{scalar-lin}, let $r$ be the stabilization index of the observable Krylov space \eqref{Vk},
$$
\mathcal V_k = \mathrm{range}\left[b, A^\top b, \dots, (A^\top)^k b\right]
\subseteq \mathbb{R}^n, \qquad k\geq 0.
$$
Then for almost all choices of distinct delays 
\begin{equation} \label{tau}
    0\leq \tau_1<\cdots<\tau_{r+1},
\end{equation}
there exist unique
coefficients $w_1, \cdots, w_{r+1}\in\mathbb{R}$ 
such that every observable trajectory satisfies
\begin{equation} \label{discrete}
\dot y(t) = \sum_{k=1}^{r+1} w_k y(t-\tau_k), \qquad \forall t.
\end{equation}
\end{theorem}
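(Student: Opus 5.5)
The plan is to work in the finite-dimensional space $\mathcal S$ of observable trajectories and reduce the claim to a generic-invertibility statement. Write $y(t)=b^\top e^{tA}u_0$. Then $\mathcal S=\{t\mapsto v^\top e^{tA}u_0\}$ depends on $u_0$ only through the functional $u_0\mapsto b^\top e^{tA}u_0$. By Theorem~\ref{thm:k-ODE} (with $m=1$), every $y\in\mathcal S$ solves the scalar ODE $y^{(r+1)}=\sum_{k=0}^r c_k y^{(k)}$, whose solution space $\mathcal Y$ has dimension $r+1$. I would also show $\mathcal S=\mathcal Y$, i.e.\ $\dim\mathcal S=r+1$: the map $u_0\mapsto (y(0),\dots,y^{(r)}(0))=(b^\top u_0,\dots,b^\top A^r u_0)$ has rank $\dim\mathcal V_r=r+1$, because in the scalar case the Krylov dimensions increase by exactly one until stabilization. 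Since solutions of the ODE are determined by these initial data, the map $\mathcal S\to\mathbb R^{r+1}$, $y\mapsto(y(0),\dots,y^{(r)}(0))$, is a linear isomorphism.

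Next, consider the evaluation functionals $\ell_k(y)=y(-\tau_k)$, $k=1,\dots,r+1$, on the $(r+1)$-dimensional space $\mathcal S$. If they are linearly independent, they form a basis of $\mathcal S^*$. The functional $y\mapsto \dot y(0)$ is linear on $\mathcal S$, so it can then be written uniquely as $\sum_k w_k\ell_k$. This gives $\dot y(0)=\sum_k w_k y(-\tau_k)$ for all $y\in\mathcal S$.

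To pass from $t=0$ to arbitrary $t$, I use shift invariance: if $y\in\mathcal S$ then $y(\cdot+t)=b^\top e^{(\cdot)A}(e^{tA}u_0)\in\mathcal S$. Applying the identity to the shifted trajectory yields \eqref{discrete} for every $t$, with coefficients independent of $t$ and of the trajectory. Uniqueness follows from the independence of the $\ell_k$.

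The main obstacle is proving that, for almost every $(\tau_1,\dots,\tau_{r+1})$, the evaluation functionals are independent. Pick a basis $\phi_1,\dots,\phi_{r+1}$ of $\mathcal S$ and set $D(\tau)=\det[\phi_j(-\tau_k)]_{j,k}$. Each $\phi_j$ is a finite sum of exponential-polynomial terms, hence real-analytic and in fact entire, so $D$ is real-analytic on $\mathbb R^{r+1}$. A nonzero real-analytic function vanishes only on a Lebesgue-null set, so it suffices to show $D\not\equiv 0$. For this I would use that linearly independent analytic functions have a nonvanishing Wronskian-type determinant at some points. Concretely, argue by induction: choose $\tau_1$ with $\phi_1(-\tau_1)\neq 0$, then successively choose $\tau_k$ so that the $k\times k$ minor is nonzero. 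The minor, expanded along its last column, is a nontrivial linear combination of $\phi_1,\dots,\phi_k$ evaluated at $-\tau_k$, with a nonzero leading cofactor by the induction hypothesis. It is therefore not identically zero by linear independence. Finally, I would restrict to the open region $0\le\tau_1<\dots<\tau_{r+1}$, where the null set stays null and $D$ is not identically zero there by analyticity and connectedness.
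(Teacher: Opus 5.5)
Your proposal is correct and takes essentially the paper's route: $\mathcal S$ is an $(r+1)$-dimensional space of analytic functions, the evaluation determinant is an analytic function of the delays that is not identically zero, and $\dot y(t)$ is then a fixed linear combination of the delayed values (you use time-shift invariance and the dual basis, the paper uses its differentiation-invariance corollary). Two of your steps are firmer than the paper's: the explicit Krylov-rank check that $\dim\mathcal S=r+1$, and the inductive cofactor argument for $D\not\equiv 0$. The paper's one-line justification of the latter (a combination vanishing at $r+1$ prescribed points must vanish identically) is not valid as written, since that combination depends on the chosen points.
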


\begin{proof}
Let 
$$
\mathcal{S} = \left\{ y(\cdot): \dot u(t)=Au(t),~ y=b^T u\right\}.
$$
From Theorem \ref{thm:k-ODE}, $y(t)$ satisfies a minimal scalar ODE of order $r+1$. Then $\mathcal{S}$ is a finite-dimensional linear space of analytic functions with  
\[\dim \mathcal{S} = r+1.
   \]
Choose a basis $\psi_1,\dots,\psi_{r+1}$ of $\mathcal S$.
For delays $\tau_1,\dots,\tau_{r+1}$, we define evaluation matrix
\[
M(\tau_1,\dots,\tau_{r+1})
=
\begin{bmatrix}
\psi_1(-\tau_1) & \cdots & \psi_{r+1}(-\tau_1)\\
\vdots & \ddots & \vdots\\
\psi_1(-\tau_{r+1}) & \cdots & \psi_{r+1}(-\tau_{r+1})
\end{bmatrix},
\]
which is a $(r+1)\times(r+1)$ square matrix whose determinant depends
analytically on the delays.

We first show that $\det M$ is not identically zero.
Indeed, if $\det M\equiv 0$, then for every choice of distinct delays
there would exist a nontrivial linear combination
\[
\phi(t)=\sum_{j=1}^{r+1} c_j \psi_j(t)
\]
that vanishes at $r+1$ arbitrarily prescribed distinct points.
Since $\phi$ is analytic, this would force $\phi\equiv 0$,
contradicting linear independence of the basis.
Therefore, $\det M$ is a nontrivial analytic function of the delays.
Its zero set is a proper analytic subset of $\mathbb{R}^{r+1}$, whose measure is zero.
Therefore, $\det M\neq 0$ for almost all choices of distinct delays.

Let us fix such a choice of distinct delays. Then $M$ is invertible, and for each $t$
\[
\begin{bmatrix}
y(t-\tau_1)\\ \vdots\\ y(t-\tau_{r+1})
\end{bmatrix}
= M \, c(t),
\]
where $c(t)$ are the coordinates of $y$ in the chosen basis and can be uniquely determined by inverting $M$.
Therefore, $y(t-\tau_1),\dots, y(t-\tau_{r+1})$ is a basis of $\mathcal{S}$. The expression \eqref{discrete} then follows from the differentiation invariance property of $\mathcal{S}$ of Corollary \ref{cor:invariance}.

\end{proof}

\begin{corollary}[Continuous delay operator] \label{cor:cont1}
Assume the same condition of Theorem \ref{thm:discrete1} holds. 
There exists $h>0$ and a finite signed atomic measure $\mu$ supported on $[0,h]$
such that every observable trajectory satisfies
\begin{equation} \label{kernel}
    \dot y(t) = \int_0^h y(t-s)\, d\mu(s).
\end{equation}

Equivalently, there exists a bounded linear functional
\begin{equation} \label{L}
    L : C([-h,0]) \to \mathbb R
\end{equation}
such that
\begin{equation} \label{cont1d}
    \dot y(t) = L(y_{t,h}), 
\end{equation}
where $y_{t,h}(\theta)=y(t+\theta),\ \theta\in[-h,0]$ as defined in \eqref{yt}.

\end{corollary}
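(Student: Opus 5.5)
The plan is to read the corollary directly off Theorem~\ref{thm:discrete1}, packaging the discrete delay representation \eqref{discrete} as integration against a finite sum of Dirac masses.

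First, fix a choice of delays $0\le\tau_1<\cdots<\tau_{r+1}$ for which $\det M\neq 0$; Theorem~\ref{thm:discrete1} guarantees such choices exist, indeed for almost all delays. It also supplies unique coefficients $w_1,\dots,w_{r+1}$ such that \eqref{discrete} holds for every observable trajectory and every $t$. Set $h:=\tau_{r+1}$. If $r=0$ and $\tau_1=0$, then $h=0$, so in that degenerate case pick any $h>0$, or simply choose $\tau_{r+1}>0$; this is harmless, since the admissible delays form a full-measure set.

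Next, define
\[
\mu:=\sum_{k=1}^{r+1} w_k\,\delta_{\tau_k},
\]
a finite signed atomic measure supported in $[0,h]$ with total variation $\sum_k|w_k|<\infty$. For any continuous $y$ one has $\int_0^h y(t-s)\,d\mu(s)=\sum_k w_k y(t-\tau_k)$, and \eqref{kernel} is then exactly \eqref{discrete}.

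For the functional form, define $L:C([-h,0])\to\mathbb R$ by
\[
L(\phi):=\int_0^h \phi(-s)\,d\mu(s)=\sum_{k=1}^{r+1} w_k\,\phi(-\tau_k).
\]
This map is clearly linear, and it is bounded because $|L(\phi)|\le\big(\sum_k|w_k|\big)\|\phi\|_\infty$. Since observable trajectories are continuous (indeed analytic), the segment $y_{t,h}$ lies in $C([-h,0])$. Moreover $y_{t,h}(-\tau_k)=y(t-\tau_k)$, so $L(y_{t,h})=\dot y(t)$, which is \eqref{cont1d}. The equivalence of \eqref{kernel} and \eqref{cont1d} is just the Riesz correspondence between bounded functionals on $C([-h,0])$ and finite signed measures, restricted here to the atomic case.

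No step is a real obstacle. The only points needing care are the degenerate case $h=0$ and checking that the history segment is well defined for all $t$. The latter holds because trajectories of a linear ODE exist for all times, so $y_{t,h}$ makes sense for every $t$.
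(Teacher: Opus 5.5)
Your proposal is correct and follows the paper's proof: take $h=\max_k\tau_k$, set $\mu=\sum_k w_k\delta_{\tau_k}$, and define $L(\phi)=\int_0^h\phi(-s)\,d\mu(s)$, so that both \eqref{kernel} and \eqref{cont1d} are direct rewritings of \eqref{discrete}. You also treat the degenerate case $r=0$, $\tau_1=0$, where $\max_k\tau_k=0$ would violate the requirement $h>0$; this closes a small gap that the paper's proof leaves open.
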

\begin{proof}
From the discrete delay representation \eqref{discrete}, we define
\[
h := \max_{1\le k\le r+1} \tau_k,
\qquad
\mu := \sum_{k=1}^{r+1} w_k\,\delta_{\tau_k},
\]
where $\delta_{\tau_k}$ denotes the Dirac measure at $\tau_k$.
Then
\[
\sum_{k=1}^{r+1} w_k\, y(t-\tau_k)
=
\int_0^h y(t-s)\, d\mu(s),
\]
and \eqref{kernel} follows.

Define the operator
\[
L(\phi)=\int_0^h \phi(-s)\, d\mu(s).
\]
Since $\mu$ is a finite signed measure, $L$ is a bounded linear functional
on $C([-h,0])$. This yields \eqref{cont1d}.
\end{proof}

\subsubsection{Vector Observables}

We now consider the more general case of vector observables $m>1$ for \eqref{lin-lin}. This is a structural generalization of the scalar result in Theorem \ref{thm:discrete1}.
\begin{theorem} [Vector discrete delay representation] \label{thm:linear}
    For the linear system with linear observables \eqref{lin-lin}, let $r$ be the stabilization index of the observable Krylov space \eqref{Vk}.
Then for almost all choices of distinct delays
\[
0\leq\tau_1<\cdots<\tau_{r+1},
\]
there exist unique matrices $W_1,\dots,W_{r+1}\in\mathbb{R}^{m\times m}$
such that every observable trajectory satisfies
\begin{equation} \label{discrete-m}
    \dot y(t)=\sum_{k=1}^{r+1} W_k\,y(t-\tau_k),
\qquad \forall t.
\end{equation}

\end{theorem}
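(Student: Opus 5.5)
The plan is to reduce \eqref{discrete-m} to one matrix identity. Then I would use the closure of Theorem~\ref{thm:k-ODE} to move that identity into a fixed space of dimension $m(r+1)$, where it becomes the invertibility of an evaluation matrix that depends analytically on the delays.

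\emph{Reduction.} Since $y(t-\tau)=Be^{-A\tau}u(t)$ and $\dot y(t)=BAu(t)$, it suffices to find $W_k$ with
\[
BA=\sum_{k=1}^{r+1}W_k\,Be^{-A\tau_k},
\qquad\text{equivalently}\qquad
A^\top B^\top=\sum_{k=1}^{r+1} e^{-A^\top\tau_k}B^\top W_k^\top .
\]
Let $\mathcal K:=[B^\top,A^\top B^\top,\dots,(A^\top)^rB^\top]\in\mathbb R^{n\times m(r+1)}$. By \eqref{BA}, $(A^\top)^{r+1}B^\top=\mathcal K\,\Gamma_{\rm last}$ for a block column built from the $C_k^\top$. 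Hence $A^\top\mathcal K=\mathcal K\Gamma$, where $\Gamma\in\mathbb R^{m(r+1)\times m(r+1)}$ is the block companion matrix of $Q$.

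\emph{Lifting.} Write $E_j$ for the $j$-th block column of $I_{m(r+1)}$. Then $B^\top=\mathcal K E_1$ and $A^\top B^\top=\mathcal K E_2$. Since $\mathcal K$ intertwines $A^\top$ with $\Gamma$, we get $e^{-A^\top\tau}B^\top=\mathcal K e^{-\Gamma\tau}E_1$. It therefore suffices to solve the square system
\[
N(\tau)\,\mathbf W=E_2,\qquad N(\tau):=\big[e^{-\Gamma\tau_1}E_1,\ \dots,\ e^{-\Gamma\tau_{r+1}}E_1\big],
\]
where $\mathbf W$ stacks the blocks $W_k^\top$. If $\det N(\tau)\neq0$, then $\mathbf W=N(\tau)^{-1}E_2$ exists. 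Uniqueness I would interpret as the paper does in the scalar case. The equivalent identity $\dot Y=\sum_k W_kY(t-\tau_k)$ must hold for the whole solution space of \eqref{kth-ODE}, which is parametrized exactly by $\mathbb R^{m(r+1)}$ through $\Gamma$. Since $N(\tau)$ is invertible, this forces $\mathbf W=N(\tau)^{-1}E_2$. I would remark that if $\mathcal K$ has dependent columns (the Krylov dimension grows by fewer than $m$ at some step), then uniqueness on the actual observable trajectories alone is only up to $\ker\mathcal K$. So the statement should be read with this canonical choice.

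\emph{Generic invertibility (the main step).} The map $\tau\mapsto\det N(\tau)$ is real analytic on $\mathbb R^{r+1}$. As in the proof of Theorem~\ref{thm:discrete1}, it suffices to show it is not identically zero; its zero set then has measure zero. For this I would test the equally spaced delays $\tau_k=(k-1)s$ with small $s>0$. By the companion structure, $\Gamma^jE_1=E_{j+1}$ up to sign conventions, for $j\le r$. Taylor expanding $e^{-\Gamma(k-1)s}E_1=\sum_j\frac{(-(k-1)s)^j}{j!}\Gamma^jE_1$ gives
\[
N=[E_1,\Gamma E_1,\dots,\Gamma^rE_1]\,\big(V(s)\otimes I_m\big)+\text{higher order},
\]
where $V(s)$ is the scalar Vandermonde-type matrix with entries $\frac{(-(k-1)s)^j}{j!}$. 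After factoring out the powers $s^j$ column by column, the leading term of $\det N$ is a nonzero constant times $s^{m r(r+1)/2}$. The constant is a power of the nonzero Vandermonde determinant of $0,1,\dots,r$ times $\det[E_1,\dots,E_{r+1}]=\pm1$. Hence $\det N\not\equiv0$.

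The obstacle I expect is controlling the higher-order terms carefully: each block column must be expanded to order $j\le r$, with the remainder shown to contribute only to higher powers of $s$. I would handle this by the block-column elimination just described, which turns $N$ into a block upper-triangular leading part plus $O(s)$ corrections after rescaling.
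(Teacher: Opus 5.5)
Your proposal is correct. Its skeleton is the paper's in transposed form: your $\Gamma$ is $\mathcal A^\top$, and your $N(\tau)$ is $M(\tau_1,\dots,\tau_{r+1})^\top$, where $M$ is the paper's stack of $F(\tau_k)=P_0e^{-\mathcal A\tau_k}$. Accordingly, $\mathbf W=N^{-1}E_2$ is exactly the paper's $P_1M^{-1}$, and your intertwining $A^\top\mathcal K=\mathcal K\Gamma$ says the paper's lifted state is $Y(t)=\mathcal K^\top u(t)$.

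The genuine difference is how $\det N\not\equiv 0$ is shown. The paper proves the rows of $\{F(\tau)\}_{\tau\ge0}$ span $\mathbb R^{m(r+1)}$ and then picks delays greedily. For $m>1$, however, the full-span argument only guarantees that \emph{some} row of a new $F(\tau_{k+1})$ leaves the current span, not all $m$ of them. So that inductive step does not by itself yield an invertible $M$.

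Your expansion at $\tau_k=(k-1)s$ gives an explicit certificate instead. Since $\Gamma^jE_1=E_{j+1}$ exactly for $j\le r$ (no sign issue), every remainder entry is $O(s^{r+1})$, while block row $j$ of the leading part scales like $s^j$ with $j\le r$. With $D(s)=\mathrm{diag}(1,s,\dots,s^r)\otimes I_m$ this gives $N=D(s)\big(V_0\otimes I_m+O(s)\big)$, hence $\det N=s^{mr(r+1)/2}\big((\det V_0)^m+O(s)\big)\neq0$ for small $s>0$. Two small corrections, neither of which matters: the leading part is $V_0\otimes I_m$, not block triangular, and $\det V_0$ also carries $\prod_j 1/j!$ and a sign. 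The paper's route foregrounds observability of the lifted system; yours is shorter and actually complete.

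Your uniqueness caveat is also right, and it goes beyond the paper, which constructs the $W_k$ but never argues uniqueness. Because $Y(t)=\mathcal K^\top u(t)$ ranges only over a subspace of dimension $\dim\mathcal V_r$, the valid $\mathbf W$ are exactly $N^{-1}(E_2+Z)$ with the columns of $Z$ in $\ker\mathcal K$. So uniqueness on genuine observable trajectories holds if and only if $\dim\mathcal V_r=m(r+1)$. This is automatic for $m=1$, which is why Theorem~\ref{thm:discrete1} is unaffected.

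A concrete failure: take $n=3$, $m=2$, $B=[e_1,e_2]^\top$, and $A^\top e_1=e_3$, $A^\top e_2=e_1+e_3$, $A^\top e_3=e_2$, so $\det A=1$. Then $r=1$ and $\dim\mathcal V_1=3<4$, giving a two-parameter family of valid $(W_1,W_2)$. Reading uniqueness over the full solution space of \eqref{kth-ODE}, as you propose, is the appropriate repair of the statement.
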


\begin{proof}
    See Appendix \ref{app:linear}.
\end{proof}

This theorem establishes existence of a vector discrete delay representation. 
The continuous delay representation, similar to Corollary \ref{cor:cont1} for scalar observable, follows naturally.
\begin{corollary}[Vector continuous delay representation] \label{cor:linear}
There exists $h>0$ and a finite signed matrix-valued atomic measure $\mu$
supported on $[0,h]$ such that
\begin{equation}
    \dot y(t)=\int_0^h d\mu(s)\, y(t-s).
\end{equation}
Equivalently, 
\begin{equation}
    \dot y(t)=L(y_t),
\end{equation}
where $L:C([-h,0];\mathbb R^m)\to\mathbb R^m$ is a bounded linear operator.
\end{corollary}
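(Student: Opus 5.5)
The plan mirrors the proof of Corollary~\ref{cor:cont1}, with Theorem~\ref{thm:linear} playing the role that Theorem~\ref{thm:discrete1} played in the scalar case. First I fix a choice of distinct delays $0\le\tau_1<\cdots<\tau_{r+1}$ from the full-measure set guaranteed by Theorem~\ref{thm:linear}, so that \eqref{discrete-m} holds for every observable trajectory with matrices $W_1,\dots,W_{r+1}\in\mathbb R^{m\times m}$. I then set
\[
h:=\max\bigl\{\tau_{r+1},\,1\bigr\}>0, \qquad \mu:=\sum_{k=1}^{r+1} W_k\,\delta_{\tau_k}.
\]
Taking $\max$ with $1$ guards against the degenerate case $r=0$, $\tau_1=0$, where $\tau_{r+1}=0$ and the requirement $h>0$ would fail. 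The resulting $\mu$ is an $m\times m$ matrix-valued atomic measure supported on $\{\tau_1,\dots,\tau_{r+1}\}\subset[0,h]$. Each entry $\mu_{ij}=\sum_k (W_k)_{ij}\delta_{\tau_k}$ is a finite signed measure, with total variation at most $\sum_k |(W_k)_{ij}|$.

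Next I verify the integral identity entrywise. For each $i$,
\[
\Bigl(\int_0^h d\mu(s)\,y(t-s)\Bigr)_i=\sum_{j=1}^m\int_0^h y_j(t-s)\,d\mu_{ij}(s)=\sum_{j}\sum_k (W_k)_{ij}\,y_j(t-\tau_k),
\]
which is exactly the $i$-th component of the right-hand side of \eqref{discrete-m}. This gives the first representation.

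For the operator form, I define $L(\phi):=\int_0^h d\mu(s)\,\phi(-s)=\sum_k W_k\phi(-\tau_k)$ for $\phi\in C([-h,0];\mathbb R^m)$. Linearity is immediate. For boundedness I equip $C([-h,0];\mathbb R^m)$ with the sup norm and use any norm on $\mathbb R^m$, which gives
\[
\|L\phi\|\le\Bigl(\sum_k\|W_k\|\Bigr)\|\phi\|_\infty .
\]
Applying $L$ to $y_{t,h}$ yields $L(y_{t,h})=\sum_k W_k y(t-\tau_k)=\dot y(t)$.

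There is no real obstacle in this argument. The only points needing care are the interpretation of integration against a matrix-valued measure, which I handle entrywise, and the possibility that $\tau_{r+1}=0$, which the choice of $h$ above takes care of.
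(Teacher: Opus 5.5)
Your proposal is correct and is the argument the paper intends: the paper gives no separate proof and says only that the result follows as in Corollary~\ref{cor:cont1}, i.e.\ take the delays and matrices $W_k$ from Theorem~\ref{thm:linear}, set $\mu=\sum_k W_k\delta_{\tau_k}$, and let $L(\phi)=\int_0^h d\mu(s)\,\phi(-s)$. Your choice $h=\max\{\tau_{r+1},1\}$ and your entrywise treatment of the matrix-valued measure are harmless refinements. The first closes the degenerate case $r=0$, $\tau_1=0$, where the paper's recipe $h=\max_k\tau_k$ would give $h=0$ and violate the requirement $h>0$.
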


\begin{remark} \label{rem:linear}
The results in this section establish that for the linear system \eqref{lin-lin}, the observables do satisfy a closed dynamical evolution law that depends only on the history. 
Theorem \ref{thm:linear} states that the observable history over a set of finite number of discrete delays can uniquely determine its dynamics. We shall refer to this result as {\em discrete uniqueness}. Corollary \ref{cor:linear} states that observable history over a finite interval is also sufficient to determine its dynamics --- a result we shall refer to as {\em interval uniqueness}. Note that for linear system \eqref{lin-lin}, interval uniqueness is ``overdetermined" in the sense a history interval contains an infinite number of delays. Since the observable dynamics is finite dimensional, according to Theorem \ref{thm:k-ODE}, discrete uniqueness is sufficient. This shall not be the case for nonlinear system, which is the topic of the next section.
\end{remark}

\section{Nonlinear Systems and Diminishing Ambiguity}
\label{sec:nonlinear}

We now return to the original, generally nonlinear, system 
\begin{equation} \label{nonlinear}
    \dot{u} = A(u), \qquad y = b(u), \qquad u\in\mathbb{R}^n, \quad y\in\mathbb{R}^m, \quad m\leq n.
\end{equation}

In order for the observable $y$ to form a dynamical system of itself, it is necessary that its past history can uniquely determine its derivative for the immediate future. Without loss of generality, let us consider $t=0$. The question is then:
\begin{quote}
    {\em Given observable history $y_{0,h}(\theta) = y(\theta)$, $\theta\in [-h,0]$, whether $\dot{y}(0^+)$ is uniquely determined.}
\end{quote}
The explicit use of the derivative at $0^+$ 
acknowledges the fact that the dynamics of $y(t)$ may not be smooth and is entirely driven by the dynamics of $u(t)$.
In fact, consider the Lie derivative of $y(t)$ along the flow of $u$,
\begin{equation} \label{lee_d}
\ell(u) = Db(u)A(u),
\end{equation}
we have
\[
\dot y(0^+) = \ell(u(0)).
\]

In contrast to the linear case, observable trajectories typically do not lie in a finite-dimensional space. As a result, finite collections of discrete delays are generally insufficient to determine the observable dynamics. Instead, one must consider dependence on the full past history.
We distinguish the following two questions:
(i) whether the infinite observable history uniquely determines the instantaneous observable dynamics;
and (ii) whether the observable dynamics can be approximated using only a finite history segment.

The first question is related to determinism of the observable dynamics, while the second concerns finite-memory approximability. We formalize these notions below.

\subsection{$T$-history Determinism}

\begin{definition}[$T$-history determinism] \label{def:history}
    We say the observable dynamics of \eqref{nonlinear} is deterministic with respect to history of length $T>0$ if for any two trajectories $u_1(\cdot), u_2(\cdot)\in K$ satisfying
\[
b(u_1(t)) = b(u_2(t)), \quad \forall t \in[-T,0],
\]
one has
\[
\dot{y}_1(0^+) = \dot{y}_2(0^+).
\]
\end{definition}

\begin{proposition}
    Assume admissible trajectories of \eqref{nonlinear} satisfy $u \in C^1$ and $b \in C^1(\mathbb{R}^n;\mathbb{R}^m)$. Then the observable dynamics is deterministic with respect to history of any $T>0$.

\end{proposition}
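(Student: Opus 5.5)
The plan is to show that $\dot y(0^+)$ is a functional of the observed history alone, namely a one-sided derivative of $y$ at $t=0$ computed from values at $t\le 0$. The point is that under the $C^1$ hypotheses the observable $y(t)=b(u(t))$ is a $C^1$ function of $t$. Its derivative at $0$ can then be computed from the left, using only the values $y(t)$ for $t\in[-T,0]$.

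\textbf{Step 1: chain rule.} Since $u\in C^1$ and $b\in C^1(\mathbb{R}^n;\mathbb{R}^m)$, the composition $y=b\circ u$ is $C^1$ on the time interval where the trajectory is defined. Its derivative is
\[
\dot y(t)=Db(u(t))\,\dot u(t)=Db(u(t))A(u(t))=\ell(u(t)),
\]
which is continuous in $t$. In particular the two-sided derivative exists at $t=0$. Hence
\[
\dot y(0^+)=\dot y(0)=\dot y(0^-)=\lim_{s\to0^+}\frac{y(0)-y(-s)}{s}.
\]
Here I use that trajectories are defined on an open interval containing $0$, so differentiability at $0$ is two-sided. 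This is where the assumption $u\in C^1$ on admissible trajectories enters.

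\textbf{Step 2: apply to two trajectories.} Take two trajectories $u_1,u_2$ in $K$ with $b(u_1(t))=b(u_2(t))$ for all $t\in[-T,0]$. Then $y_1(-s)=y_2(-s)$ for every $s\in[0,T]$. The left difference quotients therefore coincide for all $0<s\le T$, and so do their limits:
\[
\dot y_1(0^+)=\dot y_1(0^-)=\dot y_2(0^-)=\dot y_2(0^+).
\]
The argument only uses an arbitrarily small left neighbourhood of $0$, so it holds for every $T>0$.

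\textbf{Main obstacle.} The only delicate point is justifying that the right derivative equals the left derivative. This needs both that $y$ is differentiable at $0$ and that the trajectories extend past $t=0$. The latter follows from the standing assumption of existence and uniqueness for \eqref{u}, together with forward invariance of $K$.

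I would also remark that determinism does not mean $u_1(0)=u_2(0)$. It only means $\ell(u_1(0))=\ell(u_2(0))$, which is forced by matching histories.
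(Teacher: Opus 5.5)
Your proof is correct and follows the paper's argument: the chain rule makes $y=b\circ u$ a $C^1$ function, so its derivative at $t=0$ is two-sided and is fixed by the values of $y$ on $[-T,0]$, which forces $\ell(u_1(0))=\ell(u_2(0))$. Identifying $\dot y(0^+)$ with the limit of the left difference quotients just spells out the step the paper states in one line.
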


\begin{proof}
    Since $u\in C^1$ and $b\in C^1$, 
    then $y(t)=b(u(t))$ is $C^1$. Equality of $y$ on an interval $[-T,0]$ implies equality of its derivative at $t=0$. Hence $\ell(u_1(0))=\ell(u_2(0))$.
\end{proof}

This result shows that $T$-history determinism is not restrictive in smooth finite-dimensional systems. The main difficulty lies instead in approximating the observable dynamics when the determinism does not hold.

\subsection{Diminishing Ambiguity}

To quantify finite-memory predictability, we introduce the notion of diminishing ambiguity (DA).

\begin{definition}[Diminishing Ambiguity] \label{def:DA}
The system \eqref{nonlinear} is said to have diminishing ambiguity (DA) if 
there exist constants $\alpha>0$ and $C>0$ such that for any $T>0$ and any two trajectories
$u_1(\cdot),u_2(\cdot)\in K$ satisfying
\begin{equation*}
y_1(t) = y_2(t) \qquad \forall t\in[-T,0]
\end{equation*}
one has
\begin{equation} \label{DA}
\|\dot{y}_1(0^+)-\dot{y}_2(0^+)\| \le C e^{-\alpha T}.
\end{equation}
\end{definition}

 This condition quantifies how rapidly the ambiguity in the observable tendency decays as the observable history length increases. The special case of $T=\infty$ leads to infinite-history determinism from Definition \ref{def:history}.

\begin{theorem}[Existence of observable dynamics]
Assume the system \eqref{nonlinear} has diminishing ambiguity (DA). Then,
\begin{itemize}
    \item[(i)]
There exists a well-defined operator $L_\infty:\mathcal{H}_\infty\to\mathbb{R}^m$ such that
\begin{equation} \label{y0plus}
\dot y(0^+) = L_\infty(y_{0,\infty}).
\end{equation}
Or, more generally,
\begin{equation} \label{ddq_inf}
\dot y(t^+) = L_\infty(y_{t,\infty}).
\end{equation}
\item[(ii)]
For each $h>0$ there exists a family of maps $L_h:\mathcal{H}_h\to\mathbb{R}^m$, defined up to $O(e^{-\alpha h})$ ambiguity, such that for any admissible trajectory
\begin{equation}
\|\dot y(0^+)-L_h(y_{0,h})\|\le C e^{-\alpha h}.
\end{equation}

Consequently, we have finite-memory approximation
\begin{equation} \label{dde_h}
\dot y(t^+)=L_h(y_{t,h})+r_h(t), \qquad \|r_h(t)\|\le C e^{-\alpha h}.
\end{equation}

\end{itemize}

\end{theorem}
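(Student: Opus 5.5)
For part (i), I will show that the map $\phi\mapsto \dot y(0^+)$ is single-valued on $\mathcal H_\infty$, and then take this map as $L_\infty$. Fix $\phi\in\mathcal H_\infty$. By Definition~\ref{def:history} (admissible histories), there is at least one trajectory $u(\cdot)\subset K$ with $b(u(\theta))=\phi(\theta)$ for all $\theta\le 0$. Now suppose $u_1,u_2$ are two such trajectories. Then $y_1=y_2$ on $[-T,0]$ for every $T>0$. Applying the DA bound \eqref{DA} for each $T$ gives $\|\dot y_1(0^+)-\dot y_2(0^+)\|\le Ce^{-\alpha T}$. Letting $T\to\infty$ shows $\dot y_1(0^+)=\dot y_2(0^+)$, since the constants $C,\alpha$ are uniform in $T$. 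So $L_\infty(\phi):=\ell(u(0))=Db(u(0))A(u(0))$ is well defined, independently of which representative $u$ is chosen, and \eqref{y0plus} holds.

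To get \eqref{ddq_inf}, I will use autonomy and time-shift invariance. If $u(\cdot)$ is a trajectory in $K$, then so is $\tilde u(s):=u(t+s)$. Its observable satisfies $\tilde y_{0,\infty}=y_{t,\infty}$ and $\dot{\tilde y}(0^+)=\dot y(t^+)$. Applying \eqref{y0plus} to $\tilde u$ gives the claim. One point to check is that $y_{t,\infty}\in\mathcal H_\infty$, which requires the trajectory to lie in $K$ for all times $\le t$. This is exactly the standing assumption, since $K$ is forward invariant and trajectories stay in $K$ on the relevant intervals.

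For part (ii), I will construct $L_h$ by a selection. For each $\psi\in\mathcal H_h$, choose (by the axiom of choice, or any measurable selection, though no regularity is claimed) one admissible trajectory $u^\psi$ whose $h$-history is $\psi$, and set $L_h(\psi):=\ell(u^\psi(0))$. Now take any admissible trajectory $u$ with history $y_{0,h}=\psi$. Then $u$ and $u^\psi$ have equal observables on $[-h,0]$, so DA with $T=h$ gives $\|\dot y(0^+)-L_h(\psi)\|\le Ce^{-\alpha h}$. Any two such selections differ by at most $Ce^{-\alpha h}$, again by DA, which is the precise sense in which $L_h$ is ``defined up to $O(e^{-\alpha h})$ ambiguity.'' An alternative, more canonical choice would be the midpoint of the set $\{\ell(u(0))\}$ over all compatible $u$. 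I would mention this but not need it.

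Finally, \eqref{dde_h} follows by the same shift argument as in (i). Define $r_h(t):=\dot y(t^+)-L_h(y_{t,h})$; the shifted trajectory yields $\|r_h(t)\|\le Ce^{-\alpha h}$. The only step needing care is bookkeeping rather than analysis. It consists of three checks: that admissible histories used at time $t$ come from trajectories staying in $K$ on $(-\infty,t]$ (or on $[t-h,t]$ for part (ii)), that $\dot y(0^+)$ exists (here it equals $\ell(u(0))$ by the chain rule whenever $b$ is differentiable), and that the DA constants are uniform, which is what makes both the limit in (i) and the uniform remainder bound in (ii) work.
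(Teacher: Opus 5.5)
Your proposal is correct and takes essentially the same route as the paper. For (i) you apply DA on $[-T,0]$ for every $T$ and let $T\to\infty$ to make $L_\infty$ single-valued; for (ii) you select one compatible trajectory, set $L_h(\psi):=\ell(u^\psi(0))$, and invoke DA with $T=h$, and you additionally spell out the time-shift (autonomy) argument for the general-$t$ versions, which the paper leaves implicit.
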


\begin{proof} (i)
Fix $\phi\in\mathcal{H}_\infty$.
If $u_1,u_2$ generate $\phi$, then letting $T\to\infty$ in DA~\eqref{DA} yields
$\dot{y}_1(0^+)=\dot{y}_2(0^+)$.
Define $L_\infty(\phi)$ to be this common value and
\eqref{y0plus} follows.

(ii) Fix $\phi_h \in H_h$. Choose any trajectory $u$ consistent with $\phi_h$ and define
\[
L_h(\phi_h) := \ell(u(0)).
\]
If $u_1,u_2$ are two such trajectories, then DA with $T=h$ implies
\[
\|\ell(u_1(0)) - \ell(u_2(0))\| \le C e^{-\alpha h}.
\]
Therefore $L_h$ is well defined up to an ambiguity of order $e^{-\alpha h}$, and for any trajectory generating $\phi_h$,
\[
\|\dot{y}(0^+) - L_h(\phi_h)\| \le C e^{-\alpha h}.
\]
\end{proof}
The operator $L_h$ is thus an approximate, history-dependent closure for the observable dynamics. Therefore,
DA provides a quantitative condition under which finite observable history yields accurate prediction of observable dynamics.
In particular, it characterizes when nonlinear observable dynamics admit effective finite-memory representations, even though exact finite-dimensional closures generally do not exist.

\section{Conclusion}

We studied the predictability of observable dynamics in partially observed dynamical systems. 
For linear systems, we showed that observables admit a minimal-order closure determined by a Krylov subspace, leading to equivalent delay representations. 

For nonlinear systems, we distinguished between infinite-history determinism and finite-memory predictability. 
While the former follows from smoothness, the latter requires additional structure. To this end, we introduce the notion of diminishing ambiguity (DA), which provides a quantitative characterization under which observable dynamics can be approximated from finite history.

These results clarify the limits of observable-based modeling and provide a mathematical framework for finite-memory representations. Future work includes identifying verifiable conditions for diminishing ambiguity in specific classes of systems and characterizing minimal memory structures for nonlinear dynamics.

\appendix

\section{Proof for Theorem \ref{thm:linear}} \label{app:linear}

\begin{proof}
Define the augmented state
\[
Y(t):=\big(y(t),\dot y(t),\dots,y^{(r)}(t)\big)\in\mathbb{R}^{m(r+1)}.
\]
Then $Y(t)$ satisfies the linear system
\[
\dot Y(t)=\mathcal A\,Y(t),
\]
where $\mathcal A\in\mathbb{R}^{m(r+1)\times m(r+1)}$ is the block companion matrix
\[
\mathcal A=
\begin{bmatrix}
0 & I & 0 & \cdots & 0\\
0 & 0 & I & \cdots & 0\\
\vdots & \vdots & \vdots & \ddots & \vdots\\
0 & 0 & 0 & \cdots & I\\
C_0 & C_1 & C_2 & \cdots & C_r
\end{bmatrix}.
\]

Let
\[
P_0 := [\, I \;\; 0 \;\; \cdots \;\; 0 \,] \in \mathbb{R}^{m\times m(r+1)},
\qquad
P_1 := [\, 0 \;\; I \;\; 0 \;\; \cdots \;\; 0 \,],
\]
so that
\[
y(t)=P_0 Y(t), \qquad \dot y(t)=P_1 Y(t).
\]
For $\tau\ge 0$, define
\[
F(\tau):=P_0 e^{-\mathcal A \tau} \in \mathbb{R}^{m\times m(r+1)}.
\]
Then
\[
y(t-\tau)=F(\tau)\,Y(t).
\]

First, we show that the row span of the family $\{F(\tau):\tau\ge 0\}$ is
$\mathbb{R}^{m(r+1)}$.

Suppose $v\in\mathbb{R}^{m(r+1)}$ satisfies
\[
F(\tau)v=0 \qquad \forall \tau\ge 0.
\]
Then
\[
P_0 e^{-\mathcal A \tau} v = 0 \qquad \forall \tau\ge 0.
\]
Since $F(\tau)$ is analytic in $\tau$, repeated differentiation is valid. Upon differentiating it at $\tau=0$ repeatedly, we obtain
\[
P_0 \mathcal A^k v = 0, \qquad k=0,1,\dots,r.
\]
The stacked matrix
\[
\begin{bmatrix}
P_0\\
P_0\mathcal A\\
\vdots\\
P_0\mathcal A^r
\end{bmatrix}
\]
is invertible, hence $v=0$.
Therefore the row span of $\{F(\tau)\}$ is all of $\mathbb{R}^{m(r+1)}$.

Next, we construct delays $\tau_1,\dots,\tau_{r+1}$ inductively.

Since the total row span is full, there exists $\tau_1$ such that the $m$
rows of $F(\tau_1)$ are linearly independent.

Assume $\tau_1,\dots,\tau_k$ have been chosen so that the rows of
\[
\begin{bmatrix}
F(\tau_1)\\ \vdots\\ F(\tau_k)
\end{bmatrix}
\]
are linearly independent, giving dimension $mk$.
If for every $\tau$ all rows of $F(\tau)$ lie in this span, then the total
row span of $\{F(\tau)\}$ would have dimension at most $mk<m(r+1)$,
contradicting the earlier result. Hence there exists $\tau_{k+1}$ whose rows add new
independent directions.

Proceeding inductively yields distinct delays
$\tau_1,\dots,\tau_{r+1}$ such that the stacked matrix
\[
M(\tau_1,\dots,\tau_{r+1})
:=
\begin{bmatrix}
F(\tau_1)\\ \vdots\\ F(\tau_{r+1})
\end{bmatrix}
\in \mathbb{R}^{m(r+1)\times m(r+1)}
\]
is invertible.

\medskip

Finally, from
\[
\begin{bmatrix}
y(t-\tau_1)\\ \vdots\\ y(t-\tau_{r+1})
\end{bmatrix}
=
M(\tau_1,\dots,\tau_{r+1})\,Y(t),
\]
the invertibility of $M$ gives us
\[
Y(t)=M^{-1}
\begin{bmatrix}
y(t-\tau_1)\\ \vdots\\ y(t-\tau_{r+1})
\end{bmatrix}.
\]
Hence
\[
\dot y(t)
=
P_1 Y(t)
=
P_1 M^{-1}
\begin{bmatrix}
y(t-\tau_1)\\ \vdots\\ y(t-\tau_{r+1})
\end{bmatrix}.
\]
Defining $W_k$ as the corresponding block rows of $P_1M^{-1}$ yields
\[
\dot y(t)=\sum_{k=1}^{r+1} W_k\,y(t-\tau_k).
\]

Lastly, the determinant $\det M(\tau_1,\dots,\tau_{r+1})$ is an analytic function
of the delays and is not identically zero. Therefore it is nonzero for
almost all choices of distinct delays, which completes the proof.

\end{proof}

\bibliographystyle{plain}
\bibliography{reference}

\begin{thebibliography}{1}

\bibitem{ArbabiMezic2017}
Hassan Arbabi and Igor Mezic.
\newblock Ergodic theory, dynamic mode decomposition, and computation of spectral properties of the koopman operator.
\newblock {\em SIAM J. Appl. Dyn. Syst.}, 16(4):2096--2126, 2017.

\bibitem{BruntonProctorKutz2016}
Steven~L. Brunton, Joshua~L. Proctor, and J.~Nathan Kutz.
\newblock Discovering governing equations from data by sparse identification of nonlinear dynamical systems.
\newblock {\em Proc. Natl. Acad. Sci. U.S.A.}, 113(15):3932--3937, 2016.

\bibitem{ChurchillXiu_FML2023}
Victor Churchill and Dongbin Xiu.
\newblock Flow map learning for unknown dynamical systems: Overview, implementation, and benchmarks.
\newblock {\em J. Mach. Learn. Model. Comput.}, 4(2):173--201, 2023.

\bibitem{FuChangXiu_JMLMC20}
Xiaohan Fu, Lo-Bin Chang, and Dongbin Xiu.
\newblock Learning reduced systems via deep neural networks with memory.
\newblock {\em J. Mach. Learn. Model. Comput.}, 1(2):97--118, 2020.

\bibitem{mori1965}
Hazime Mori.
\newblock Transport, collective motion, and {B}rownian motion.
\newblock {\em Prog. Theor. Phys.}, 33(3):423--455, 1965.

\bibitem{Robinson2011}
James~C. Robinson.
\newblock {\em Dimensions, Embeddings, and Attractors}.
\newblock Cambridge University Press, 2011.

\bibitem{SauerYorkeCasdagli1991}
Tim Sauer, James~A. Yorke, and Martin Casdagli.
\newblock Embedology.
\newblock {\em J. Stat. Phys.}, 65:579--616, 1991.

\bibitem{Takens1981}
Floris Takens.
\newblock Detecting strange attractors in turbulence.
\newblock In {\em Dynamical Systems and Turbulence, Warwick 1980}, volume 898 of {\em Lecture Notes in Mathematics}, pages 366--381. Springer, 1981.

\bibitem{zwanzig1973}
Robert Zwanzig.
\newblock Nonlinear generalized {Langevin} equations.
\newblock {\em J. Stat. Phys.}, 9(3):215--220, 1973.

\end{thebibliography}

\end{document}